

\documentclass{scrartcl} 




\usepackage{color}
\usepackage{amsmath,amsfonts,amssymb,amsthm}
\usepackage{graphicx}

\newtheorem{theorem}{Theorem}[section]%
\newtheorem{corollary}[theorem]{Corollary}%
\newtheorem{lemma}[theorem]{Lemma}%
\newtheorem{proposition}[theorem]{Proposition}%
\newtheorem{remark}[theorem]{Remark}%



\title{Absolute Continuity of Complex Martingales and of Solutions to Complex Smoothing Equations } 



\author{%
  Ewa~Damek\footnote{University of Wroc\l aw, Poland. E-mail: \texttt{edamek@math.uni.wroc.pl}}
 \and 
  Sebastian~Mentemeier\footnote{University of Kassel,
    Germany.   E-mail: \texttt{mentemeier@mathematik.uni-kassel.de} }}




\begin{document}

\maketitle

\begin{abstract}
Let $X$ be a $\mathbb{C}$-valued random variable with the property that
$$X \ \text{ has the same law as }\ \sum_{j\ge1} T_j X_j$$
where $X_j$ are i.i.d.\ copies of $X$, which are independent of the (given) $\mathbb{C}$-valued random variables $ (T_j)_{j\ge1}$. We provide a simple criterion for the absolute continuity of the law of $X$ that requires, besides the known conditions for the existence of $X$, only finiteness of the first and second moment of  $N$ - the number of nonzero weights $T_j$.
Our criterion applies in particular to Biggins' martingale with complex parameter. \\
Keywords: Absolute Continuity; Branching process; Characteristic function; Complex smoothing equation. 
\end{abstract}


\newcommand{\red}{\color{red}}
\newcommand{\blue}{}
\newcommand{\R}{\mathbb{R}}
\renewcommand{\P}{\mathbb{P}}
\newcommand{\E}{\mathbb{E}}
\newcommand{\C}{\mathbb{C}}
\newcommand{\N}{\mathbb{N}}
\newcommand{\V}{\mathbb{V}}
\newcommand{\eqdist}{\stackrel{\mathrm{law}}{=}}
\renewcommand{\c}{\mathcal}

\newcommand{\eps}{\varepsilon}
\newcommand{\Ind}{\mathbf{1}}
\newcommand{\8}{\infty}
\newcommand{\s}{\sigma}

\newcommand{\imag}{\mathrm{i}}

\newcommand{\Sp}{\mathbb{S}_\ge}
\newcommand{\Sd}{S^{d-1}}

\newcommand{\diag}{\mathrm{diag}}
\newcommand{\innerprod}[1]{\langle #1 \rangle}

\newcommand{\vect}[2]{\left( \begin{array}{c} #1 \\ #2 \end{array} \right)}
\newcommand{\diagmatr}[2]{\left( \begin{array}{cc} #1 & 0  \\ 0 & #2  \end{array} \right)}






\section{Introduction}

In a variety of models coming from theoretical computer science, applied probability, economics or statistical physics, quantities of interest exhibit asymptotic fluctuations that do not have a normal or $\alpha$-stable distribution. In many cases, the limiting law $\mu$ can be characterized as a fixed point of a mapping $\c S$ of the form
\begin{equation}\label{eq:ST} \c S(\mu) ~=~ \text{ Law of } \Big(\sum_{j \ge 1} T_j X_j \Big), \end{equation}
where $X_j$ are i.i.d.\ complex-valued random variables with law $\mu$ and independent of the given  complex variables $(T_j)_{j\ge1}$. See \cite{Meiners.Mentemeier:2017} and references therein for a list of examples.

The fixed point property $\mu=\c S (\mu)$ then {\blue may} and shall be used to analyze properties of $\mu$. 
Let us stress at this early point that $\c S$ usually has multiple fixed points, which have to be analyzed by different methods. They can roughly be classified by a parameter $\alpha$: The first class of fixed points are mixtures of $\alpha$-stable laws, while the second class of fixed points appears only for $\alpha \ge1$. Fixed points of the second class are limit of martingales in an associated weighted branching process. Under an additional very mild assumption, fixed points from the second class are integrable.

In this note, we will study absolute continuity of fixed points of $\c S$ and take advantage of the classification described above, which was recently given in \cite{Meiners.Mentemeier:2017}. {\blue This simplifies essentially the approach.} For fixed points from the first class, absolute continuity can be proved along similar lines as for infinitely divisble laws. For fixed points from the second class, 
{\blue we apply Fourier analytic methods 
and then integrability allows us to work with derivatives of the characteristic function}. 

Our setting includes as well the case of real-valued $X$ and $(T_j)_{j\ge1}$. For the case of nonnegative $X$ and $(T_j)_{j\ge1}$, general results have been obtained in \cite{Biggins.Grey:1979} and \cite{Liu:2001}. The real- and complex-valued setup has been treated recently also in \cite{Leckey:2016}. The paper \cite{Leckey:2016} covers more general classes of fixed point equations, but at the price of stronger assumptions than imposed here. {\blue For instance  negative moments of $T_j$ are required which is not natural for \eqref{eq:ST}, while integrability of $T_j$ is.} The approach in \cite{Leckey:2016} is different, for it does not take into account a-priori knowledge as the classification of fixed points described above. Our approach is based on ideas in \cite{Biggins.Grey:1979}. Absolute continuity of a specific complex-valued model is also studied in \cite{Chauvin+Liu+Pouyanne:2014}.

We continue in Section \ref{sect:Results} with a precise description of the setup and the set of fixed points  of $\c S$. Then we state our results and describe several examples that motivated our study. The  proofs are given in Section \ref{sect:Proofs}.

\section{Statement of Results} \label{sect:Results}

 \subsection{Solutions to complex smoothing equations}

Let $(T_j)_{j\ge1}$ be complex-valued random variables, satisfying
$$ N:=\#\{j \, : \, T_j \neq 0\} ~=~ \max \{ j \, : \, T_j \neq 0\} < \infty \quad \P\text{-a.s.}$$
Let $X$ be a complex random variable with law $\mu$ such that $\c S(\mu)=\mu$. Then 
\begin{equation}\label{eq:SFPE} X~\eqdist~\sum_{j=1}^N T_j X_j. \end{equation}
This gives rise as well to an equation for the characteristic function $\phi(\xi)=\E \big[ e^{-\imag \innerprod{\xi,X}}\big]$\footnote{Note that in the definition of the characteristic function, the identification $\C\simeq\R^2$ and the real inner product is used.}, namely
\begin{equation} 
\label{eq:SFPE:characteristic}
\phi(\xi) ~=~ \E \Big[ \prod_{j=1}^N \phi( \bar{T_j} \xi) \Big].
\end{equation}

The set of all solutions to $\c S(\mu)=\mu$ has been described in \cite{Meiners.Mentemeier:2017} under the following mild assumptions. Upon introducing the function
$$ m(s)~:=~\E \Big[ \sum_{j \ge 1} |T_j|^s \Big],$$
we assume that
\begin{equation}\label{a1}\tag{A1} m(0)= \E[N] >1. \end{equation}
\begin{equation}\label{a2}\tag{A2} m(\alpha)=1 \text{ for some } \alpha >0. \end{equation}
Under \eqref{a2}, $W_1:= \sum_{j=1}^N |T_j|^\alpha$ defines a mean one random variable. Assume further
\begin{equation}\label{a3}\tag{A3}  m'(\alpha):= \E \Big[ \sum_{j=1}^N |T_j|^\alpha \log |T_j| \Big] \in (-\infty,0) \ \text{ and } \ \E \Big[ W_1 \log_+ W_1 \Big] < \infty. \end{equation}
Let $U \subset \C$ be the smallest closed multiplicative subgroup generated by the support of $(T_j)_{j\ge1}$.

Suppose that \eqref{a1}--\eqref{a3} hold with $\alpha \neq 1$ (in the case $\alpha=1$, an additional technical assumption is required). Then, by \cite[Theorem 1.2]{Meiners.Mentemeier:2017}, there exists a nonnegative random variable $W$ with unit mean and a $\C$-valued random variable $Z$ such that if the law of $X$ is a fixed point of $\c S$, then
\begin{equation}\label{eq:decompositionX} X \eqdist Y_W + xZ. \end{equation}
where $x \in \C$ and $(Y_t)_{t \ge 0}$ is a complex-valued L\'evy process with the invariance property
\begin{equation} \label{eq:operator stable} uY_t \eqdist Y_{|u|^\alpha t} \qquad \text{ for all } u \in U, t >0,  \end{equation} and $(Y_t)_{t \ge 0}$ is  independent of $(W,Z)$. Note that $Y_t \equiv 0$ is a valid choice.
If $(Y_t)_{t \ge 0}$ is nontrivial, it holds $\E\big[ |Y_W|^\alpha \big] = \infty$, see \cite[Remark 1.4]{Meiners.Mentemeier:2017}

\subsubsection{Martingales and the Weighted Branching Process}

To give a description of $W$ and $Z$, let us define a weighted branching process as follows: Let $\V= \bigcup_{n=0}^\infty \N^n$ denote the infinite tree with Harris-Ulam labelling and root $\emptyset$. For each $v \in \V$, we denote by $|v|$ its generation. 
To each $v \in \V$, we attach an independent copy $(T_1(v), T_2(v), \dots)$ of $(T_j)_{j\ge1}$ and define the weighted branching process by
$$ L(\emptyset) := 1, \quad L(vi) := T_i(v) L(v),$$
where $vi$ denotes concatenation: if $v=v_1 \cdots v_k$, then $vi=v_1 \cdots v_k i$.

Then $W := \lim_{n \to \infty} \sum_{|v|=n} |L(v)|^\alpha$ with $\E[W]=1$. Here, \eqref{a2} implies that $W_n$ is a martingale and \eqref{a3} guarantees its convergence in $L^1$ by Biggins' theorem.

$Z=0$ unless $\E \big[ \sum_{j=1}^N T_j \big]=1$ and $\alpha \ge 1$.  If these requirements are satisfied, then $Z_n := \sum_{|v|=n} L(v)$ defines a $\C$-valued martingale with mean one.
In our results, we will require that
\begin{equation}\label{eq:Z}\tag{Z1} \lim_{n \to \infty} Z_n \text{ exists a.s.\ and in $L^1$} \end{equation}
        We have $Z:=\lim_{n \to \infty} Z_n$, if \eqref{eq:Z} holds, and $Z:=0$ otherwise. 
Under \eqref{a1}-\eqref{a2}, 
 a sufficient condition for $Z_n$ to converge a.s. and in $L^p$ for all $p < \alpha$ is $\alpha \in (1,2)$ and
\begin{equation}\label{a4}\tag{A4}
m'(\alpha) \le 0 \ \text{ and } \ \E \Big[ \big|Z_1\big|^\alpha \log^{2+\epsilon}_+ |Z_1| \Big]  \text{ for some } \epsilon >0,
\end{equation}
see \cite[Theorem 2.1]{Kolesko.Meiners:2017}. Hence, under \eqref{a1}, \eqref{a2} and \eqref{a4}, $\E[Z]=1$ and $Z$ is in $L^p$ for all $p < \alpha$. 

If $\alpha=2$, under mild conditions, $Z$ is either 0 or a constant a.s., see \cite[Proposition 2.2]{Kolesko.Meiners:2017}, \cite[Proposition 1.1.]{Meiners.Mentemeier:2017}. If $\alpha=1$, then $\E [\sum_{j=1}^N |T_j|] = \E [\sum_{j=1}^N T_j]=1$ implies $U \subset \R_+$, hence $Z=W$. Continuity properties of the nonnegative random variable $W$ {\blue have been} studied in \cite{Biggins.Grey:1979, Liu:2001}.


\subsection{Results}

We study the absolute continuity of $Z$. By the discussion above, we may focus on the case $1  <\alpha <2$. We further assume that $\P(N=0)=0$, since otherwise all solutions have an atom in zero. 
%
%

\begin{theorem}\label{thm:continuity of Z}
Suppose $N>0$ a.s., \eqref{a1}-\eqref{a2} with $\alpha \in (1,2)$, \eqref{eq:Z}  and $\mathrm{supp}(Z) \nsubseteq \R$ together with
\begin{equation}\tag{C1} \label{c1} \quad \E\big[ N^2 \big] < \infty \ \text{ and } \ \E \Big[ N \sum_{j=1}^N \log_+ |T_j| \Big] < \infty. \end{equation}
Then the law of $Z$ is absolutely continuous. \\ If $\mathrm{supp}(Z) \subset \R$, then \eqref{c1} can be replaced with the assumption $\E [N] < \infty$. {\blue This applies in particular to \eqref{eq:ST} with real valued $T_j$}.
\end{theorem}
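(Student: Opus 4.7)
The strategy is Fourier-analytic. Let $\phi(\xi) = \E\bigl[e^{-\imag \innerprod{\xi, Z}}\bigr]$ denote the characteristic function of $Z$; since $Z \in L^p$ for $p < \alpha$, $\phi$ is $C^1$. My goal is to show that $|\phi(\xi)|$ decays fast enough as $|\xi| \to \infty$ that $\phi \in L^{p_0}(\C, d\xi)$ for some finite $p_0$; one further iteration of the smoothing equation combined with Jensen's inequality (in the form $|\phi(\xi)|^p \le \E\bigl[\prod_{|v|=n} |\phi(\overline{L(v)}\xi)|^p\bigr]$ for $p \ge 1$) then upgrades this to $\phi \in L^1(\C)$, yielding a bounded continuous density by Fourier inversion. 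The workhorse throughout is the iterated smoothing equation
\begin{equation*}
\phi(\xi) = \E\Bigl[\prod_{|v|=n} \phi(\overline{L(v)}\xi)\Bigr],
\end{equation*}
whence the pointwise bound $|\phi(\xi)| \le \E\bigl[\prod_{|v|=n} |\phi(\overline{L(v)}\xi)|\bigr]$.

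Step~1 (non-concentration). I would establish $|\phi(\xi)| < 1$ for every $\xi \neq 0$, and upgrade this by compactness and continuity to a uniform bound $\sup_{r_1 \le |\xi| \le r_2} |\phi(\xi)| \le q < 1$ on any annulus away from the origin. If $|\phi(\xi_0)| = 1$ at some $\xi_0 \neq 0$, then the triangle inequality $|\E X| \le \E |X|$ applied to the smoothing equation, together with the pointwise bound $|\phi| \le 1$, forces $|\phi(\overline{T_j}\xi_0)| = 1$ almost surely for $j \le N$. Hence the level set $\{|\phi| = 1\}$ is invariant under multiplication by the closed multiplicative group $U$ generated by the support of $(T_j)$. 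Under \eqref{a1}--\eqref{a2}, $U$ contains elements of a nontrivial range of moduli; combined with $\mathrm{supp}(Z) \nsubseteq \R$ (which prevents $U$ from being confined to the real axis) and the continuity of $\phi$, this forces $\{|\phi| = 1\}$ to be too large to be compatible with the non-degeneracy of $Z$, a contradiction.

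Step~2 (branching amplification and conclusion). Fix an annulus $A$ on which $|\phi| \le q < 1$. For large $|\xi|$, let $N_n(\xi) := \#\{|v| = n : \overline{L(v)}\xi \in A\}$. Under \eqref{a2} and the log-moment condition in \eqref{c1}, the branching random walk $-\log|L(v)|$ at generation $n$ deposits a positive density of particles at every level within a window of width $O(n)$ (this is the content of Biggins-type convergence theorems, whose $L^1$-convergence requires precisely that log-moment). The second-moment hypothesis $\E[N^2] < \infty$ then provides, via a standard variance estimate, a lower bound $N_n(\xi) \ge cn$ with high probability \emph{uniformly in} $\xi$; inserting this into the iterated bound yields $|\phi(\xi)| \lesssim q^{cn}$ on a set of overwhelming probability, hence uniform decay of $|\phi|$ at infinity at a rate sufficient for $\phi \in L^{p_0}(\C)$. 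The main obstacle I anticipate is precisely this uniformity: turning the pointwise non-concentration bound into \emph{uniform} quantitative decay in $|\xi|$ is exactly where the second-moment hypothesis $\E[N^2] < \infty$ must be invoked in the complex, genuinely two-dimensional setting. When $\mathrm{supp}(Z) \subset \R$ the problem reduces to a one-dimensional smoothing equation, handled by the classical Biggins--Grey argument using only $\E[N] < \infty$.
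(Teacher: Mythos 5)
Your Step~1 is essentially the paper's intermediate step (fullness of $Z$ via the multiplicative structure of $\mathrm{supp}(Z)$, then $|\phi(\xi)|<1$ for $\xi\neq 0$ from Eq.~\eqref{eq:SFPE:characteristic}), but Step~2 contains the real gap, and it is not a technicality. To convert ``$|\phi|\le q<1$ on an annulus $A$'' into \emph{uniform polynomial decay} of $\phi$ you bound $|\phi(\xi)|\le \E\bigl[q^{N_n(\xi)}\bigr]\le q^{cn}+\P\bigl(N_n(\xi)<cn\bigr)$, so you need the lower-deviation probability for the particle count $N_n(\xi)$ to decay at (at least) the same polynomial rate in $|\xi|$, uniformly in the direction of $\xi$. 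A ``standard variance estimate'' does not give this: the second moment of $N_n(\xi)=\#\{|v|=n:\overline{L(v)}\xi\in A\}$ involves pairs of particles and requires control of quantities like $\E\bigl[(\sum_j|T_j|^\alpha)^2\bigr]$, which is not implied by $\E[N^2]<\infty$; and even with a finite variance, Chebyshev/Paley--Zygmund only yields a failure probability bounded by a constant or $O(1/n)$, nowhere near the small-value estimates needed (this is the well-known small-value problem for branching random walks, and note that \eqref{a3} is not even assumed here, so the tilted-walk positioning you invoke via Biggins' theorem is not licensed). The second assertion, that $\phi\in L^{p_0}$ upgrades to $\phi\in L^1$ by ``one further iteration plus Jensen,'' is also unsubstantiated: integrating the iterated inequality forces a change of variables $\xi\mapsto \overline{T}_j\xi$ whose Jacobian produces negative moments $\E|T_j|^{-2}$ (precisely the type of hypothesis, as in Leckey's work, that this theorem is designed to avoid), and the conclusion you are aiming for (a bounded continuous density) is stronger than the statement and not reachable under these weak moment assumptions.

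The paper's proof sidesteps both obstacles by never asking for a decay \emph{rate} of $\phi$ itself. It only proves the qualitative statement $\limsup_{|\xi|\to\infty}|\phi(\xi)|=0$ (via a generating-function argument with $\E[N]<\infty$), and then differentiates Eq.~\eqref{eq:SFPE:characteristic}: the Wirtinger derivatives $g=\partial_{\bar\xi}\phi$, $\partial_\xi\phi$ satisfy a \emph{linear} functional equation in which the extra factors $\phi(\overline{T}_i\xi)$ can be made uniformly small for large $|\xi|$, producing a contraction $|g(\xi)|\le p\,\E|g(B\xi)|$ with $p<1$ and $p\,\E|B|^{-1}<1$; a Gronwall-type lemma of Liu then gives $|g(\xi)|=O(|\xi|^{-1})$ using only $\E[N]<\infty$. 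On $\C\simeq\R^2$ this decay just fails to be square integrable, and this is exactly where \eqref{c1} enters: a Jensen/second-moment estimate plus an induction over scales shows $\int_{|\xi|\le K}|g|^2$ is bounded, i.e.\ $\partial_{\xi_1}\phi,\partial_{\xi_2}\phi\in L^2$. Plancherel then identifies $-\imag z_j\,\P(Z\in dz)$ with an $L^2$ density, giving absolute continuity on $\C\setminus\{0\}$, and a possible atom at $0$ is excluded by $\limsup|\phi|=0$. (In the real case the $O(|\xi|^{-1})$ bound is already square integrable on $\R$, which is why $\E[N]<\infty$ suffices there --- your reduction to Biggins--Grey is in the right spirit, though the mechanism is this integrability dichotomy rather than the classical argument itself.) If you want to salvage your outline, you would have to either solve the uniform small-value problem for $N_n(\xi)$ or, as the authors do, move the analysis from $\phi$ to its first derivatives.
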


As mentioned before, \eqref{a4} is a mild sufficient condition for \eqref{eq:Z}. 
If higher order moment conditions on $Z$ and $N$ are satisfied, one can prove further smoothness properties of the Fourier transform of $Z$, see Remark \ref{rem:m2 finite}.

Concerning $(Y_t)_{t \ge 0}$, standard arguments yield the following continuity result:

\begin{proposition}\label{lem:Yt}
Suppose \eqref{a1}-\eqref{a2} with $\alpha \in (0,2]$. Suppose that $(Y_t)_{t \ge 0}$ is a nondegenerate complex-valued L\'evy process satisfying \eqref{eq:operator stable} and that there is no $U$-invariant $\R$-linear subspace of $\C$.  Then for each $t>0$, the law of $Y_t$ is absolutely continuous.
\end{proposition}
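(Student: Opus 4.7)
The plan is to show that the characteristic function of $Y_t$ is integrable on $\C$, so that $Y_t$ admits a continuous (and hence absolutely continuous) density by Fourier inversion. The first step is to write $\E\big[e^{-\imag \innerprod{\xi,Y_t}}\big] = e^{t\psi(\xi)}$ via the L\'evy--Khintchine formula for $Y_1$ with Gaussian covariance $A$ and L\'evy measure $\nu$, and then use the identity $\innerprod{\xi,ux} = \innerprod{\bar u\xi,x}$ (real inner product on $\C\simeq\R^2$) to translate \eqref{eq:operator stable} into the homogeneity $\psi(\bar u\xi) = |u|^\alpha \psi(\xi)$ for every $u\in U$. Consequently the continuous function
\[ f(\xi) := -\mathrm{Re}\,\psi(\xi) = \tfrac12 \innerprod{\xi,A\xi} + \int_{\C} \big(1 - \cos \innerprod{\xi,x}\big)\, \nu(dx) \ge 0 \]
satisfies $f(\bar u\xi) = |u|^\alpha f(\xi)$ for all $u\in U$.

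The central step is to prove $f(\xi) > 0$ for every $\xi\ne 0$. Suppose for contradiction $f(\xi_0) = 0$ with $\xi_0\ne 0$; then $\xi_0 \in \ker A$ and $\innerprod{\xi_0,x} \in 2\pi\mathbb{Z}$ for $\nu$-a.e.\ $x$. The no-invariant-line hypothesis provides some $u_1 \in U \setminus \R$, making $\bar u_1 \xi_0$ and $\xi_0$ $\R$-independent, and the homogeneity then yields $f(\bar u_1 \xi_0) = 0$. Repeating the reasoning with $\bar u_1 \xi_0$ in place of $\xi_0$ forces $A = 0$ and confines $\nu$ to the full-rank lattice
\[ \Lambda := \{x\in\C : \innerprod{\xi_0,x} \in 2\pi\mathbb{Z},\ \innerprod{\bar u_1 \xi_0,x} \in 2\pi\mathbb{Z} \}, \]
whose nonzero points are bounded away from $0$. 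Moreover, \eqref{a1}--\eqref{a2} preclude $|T_j|\equiv 1$ (otherwise $m(\alpha) = \E[N] > 1$), so $U$ contains some $u_0$ with $|u_0|>1$, and the induced L\'evy measure scaling $(u_0)_*\nu = |u_0|^\alpha\nu$ implies $u_0^{-n} p \in \mathrm{supp}(\nu) \subset \Lambda\setminus\{0\}$ for every $p \in \mathrm{supp}(\nu)$ and $n \ge 0$. But $|u_0^{-n}p| = |u_0|^{-n}|p| \to 0$, a contradiction. Hence $\nu = 0$, and together with $A=0$ this makes $Y_t$ deterministic, against nondegeneracy.

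Once $f > 0$ on $\C\setminus\{0\}$ is secured, continuity of $f$ on the compact annulus $K := \{\xi \in \C : 1\le|\xi|\le|u_0|\}$ yields $c := \min_K f > 0$. For $|\xi| \ge 1$, pick $n \ge 0$ with $|u_0|^n \le |\xi| < |u_0|^{n+1}$; since $|\bar u_0| = |u_0|$, the element $\bar u_0^{-n}\xi$ lies in $K$, and the homogeneity of $f$ gives $f(\xi) = |u_0|^{n\alpha} f(\bar u_0^{-n}\xi) \ge c\,|u_0|^{-\alpha} |\xi|^\alpha$. Since $\alpha > 0$, this polynomial lower bound yields $\int_\C e^{-tf(\xi)}\,d\xi < \infty$ for every $t > 0$, and Fourier inversion then delivers the continuous density for $Y_t$.

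The main obstacle is the positivity step: one must convert the algebraic $\bar U$-homogeneity of $f$ and the topological absence of a $U$-invariant line into the pointwise conclusion $f(\xi) > 0$ for $\xi \ne 0$. The lattice argument accomplishes this by jointly exploiting a non-real element of $U$ (which produces two $\R$-independent vanishing directions, collapsing $A$ and forcing $\nu$ onto a lattice) and an element of $U$ of modulus greater than one (which iteratively contracts that putative lattice support of $\nu$ toward the origin, ruling out any nontrivial $\nu$).
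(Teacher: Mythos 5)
Your proof is correct, but it takes a genuinely different route from the paper. The paper's argument is a two-step reduction to known theory: from the absence of a $U$-invariant line and \eqref{eq:operator stable} it deduces that $Y_t$ is full, then uses an element $u\in U$ with $|u|\neq 1$ (available since $m$ is non-constant under \eqref{a1}--\eqref{a2}) together with infinite divisibility to identify the law of $Y_t$ as a full \emph{operator semistable} law, and finally cites Meerschaert--Scheffler (Theorem 7.1.15) for the existence of a Lebesgue density. You instead give a self-contained Fourier-analytic proof: translating \eqref{eq:operator stable} into the homogeneity $\Re\psi(\bar u\xi)=|u|^\alpha\Re\psi(\xi)$ of the L\'evy exponent, proving strict positivity of $f=-\Re\psi$ off the origin (your lattice argument -- a non-real $u_1\in U$ kills the Gaussian part and pins $\nu$ to a full-rank lattice, and an element $|u_0|>1$ contracts the support of $\nu$ into the forbidden neighbourhood of $0$, so a zero of $f$ forces $Y$ deterministic, against nondegeneracy), and then converting the scaling into the bound $f(\xi)\ge c|\xi|^\alpha$ for $|\xi|\ge 1$, so that $|\E e^{-\imag\innerprod{\xi,Y_t}}|\le e^{-ct|\xi|^\alpha}$ is integrable and inversion gives a (bounded, continuous, in fact smooth) density. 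In effect you re-prove the special case of the cited operator-semistable density theorem that the paper invokes: the paper's route is shorter and leans on established theory, while yours is elementary, quantitative (explicit decay of the characteristic function and hence extra regularity of the density), and makes the role of the two hypotheses -- a non-real element of $U$ and an element of modulus $\neq 1$ -- completely transparent. Two small points to make explicit if you write this up: "nondegenerate" must be read as "$Y_t$ not a.s.\ constant" for your final contradiction (the standard reading, and the one the paper needs too), and it is cleanest to work only with moduli, $|\phi_t(\bar u\xi)|=|\phi_{|u|^\alpha t}(\xi)|$, so that no branch-of-logarithm issue arises in deriving the homogeneity of $f$.
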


Combining both results and using that $(Y_t)_{t \ge 0}$ is independent of $(W,Z)$ in the representation \eqref{eq:decompositionX}, we have:

 \begin{corollary}\label{thm:continuity}
Suppose \eqref{a1}-\eqref{a4}, (C1) and that 
there is no $U$-invariant $\R$-linear subspace of $\C$. Then the law of any nontrivial solution to \eqref{eq:SFPE} is absolutely continouous.
\end{corollary}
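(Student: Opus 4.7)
The plan is to combine the structural decomposition \eqref{eq:decompositionX} with the two preceding results. Writing any fixed point as $X \eqdist Y_W + xZ$ with $(Y_t)_{t\ge0}$ independent of $(W,Z)$, one observes that $X$ is nontrivial iff either $(Y_t)_{t\ge0}$ is nontrivial, or $x\neq 0$ and $Z$ is nondegenerate. Because absolute continuity is preserved under convolution with an independent distribution, it suffices to exhibit in each of the two cases a nondegenerate absolutely continuous summand.

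\textbf{Case 1: $(Y_t)_{t\ge0}$ nontrivial.} The hypothesis that no $U$-invariant $\R$-linear subspace of $\C$ exists is exactly that of Proposition~\ref{lem:Yt}, which yields absolute continuity of $Y_t$ for every $t>0$. Under \eqref{a1} and the standing assumption $\P(N=0)=0$, a standard branching argument---the recursion $\P(W=0)=\E[\P(W=0)^N]$ has $0$ as its minimal nonnegative solution when $\P(N\ge1)=1$ and $\E[N]>1$---gives $W>0$ a.s. Conditioning on $W$ then realizes $Y_W$ as a mixture of absolutely continuous laws, hence $Y_W$ is absolutely continuous; convolving with the independent $xZ$ preserves this.

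\textbf{Case 2: $(Y_t)_{t\ge0}\equiv 0$.} Then $X\eqdist xZ$ with $x\neq 0$ and $Z$ nontrivial, and the conclusion reduces to absolute continuity of $Z$. Here Theorem~\ref{thm:continuity of Z} applies: \eqref{a4} is the quoted sufficient condition for \eqref{eq:Z}, and \eqref{c1} handles the case $\mathrm{supp}(Z)\nsubseteq\R$ directly while, since $\E[N^2]<\infty$ implies $\E[N]<\infty$, it also covers the fallback hypothesis for the subcase $\mathrm{supp}(Z)\subset\R$. Hence $Z$, and with it $xZ=X$, is absolutely continuous.

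The conceptual work lies entirely in establishing Theorem~\ref{thm:continuity of Z} and Proposition~\ref{lem:Yt}; the corollary itself is essentially bookkeeping on top of the decomposition \eqref{eq:decompositionX} together with the elementary fact that a convolution with an absolutely continuous factor is absolutely continuous. The only step not packaged inside those two results is the claim $W>0$ a.s., but this is a routine consequence of the branching recursion under \eqref{a1} and $\P(N\ge1)=1$.
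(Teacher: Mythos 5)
Your route is the one the paper intends --- the corollary is stated with no proof beyond the remark that it follows by combining Theorem~\ref{thm:continuity of Z} and Proposition~\ref{lem:Yt} through the representation \eqref{eq:decompositionX} --- but one step of your bookkeeping is genuinely wrong and needs a different fix. In Case 2 you dispose of the subcase $\mathrm{supp}(Z)\subset\R$ by invoking the fallback clause of Theorem~\ref{thm:continuity of Z}. That clause can only give a density with respect to one-dimensional Lebesgue measure on $\R$ (a law carried by $\R$ cannot be absolutely continuous on $\C$), so in that subcase $X\eqdist xZ$ would be carried by the line $x\R$ and would be singular with respect to Lebesgue measure on $\C$; the corollary's conclusion would not follow from what you cite. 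The correct observation is that this subcase is vacuous under the corollary's hypotheses: if $Z$ is nondegenerate with $\mathrm{supp}(Z)\subset\R$, then taking imaginary parts in $Z\eqdist\sum_{j=1}^{N}T_jZ_j$ gives $\sum_{j=1}^{N}\mathrm{Im}(T_j)\,Z_j=0$ a.s.; conditioning on $(N,(T_j)_{j\ge1})$ and using that the $Z_j$ are i.i.d., nondegenerate and independent of the weights forces $\mathrm{Im}(T_j)=0$ for all $j\le N$ a.s., hence $U\subset\R$ and $\R$ would be a $U$-invariant line, contradicting the standing hypothesis. (If instead $Z$ is degenerate, your Case 2 solution is a point mass, i.e.\ not a nontrivial solution, so nothing is to be proved.) Thus only the $\mathrm{supp}(Z)\nsubseteq\R$ branch of Theorem~\ref{thm:continuity of Z}, with \eqref{c1}, is ever used.

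Two smaller points in Case 1. The summands $Y_W$ and $xZ$ are in general not independent: only $(Y_t)_{t\ge0}$ is independent of the pair $(W,Z)$, while $W$ and $Z$ are built from the same weighted branching process and are typically dependent, so the principle ``convolution with an independent absolutely continuous factor'' does not apply literally. The repair is immediate: condition on $(W,Z)=(w,z)$; by independence of $(Y_t)_{t\ge0}$ from $(W,Z)$ the conditional law of $X$ is that of $Y_w+xz$, which is absolutely continuous for $w>0$ by Proposition~\ref{lem:Yt}, and since $W>0$ a.s.\ the unconditional law is a mixture of absolutely continuous laws. Finally, in your extinction argument for $W>0$ you should say why $\P(W=0)<1$: the fixed points of $s\mapsto\E[s^N]$ in $[0,1]$ are only $0$ and $1$ when $\P(N=0)=0$ and $\E[N]>1$, and $\P(W=0)=1$ is excluded because \eqref{a3} yields $\E[W]=1$ via Biggins' theorem.
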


\subsection{Examples}

\subsubsection*{Biggins' martingale with complex parameter}

A branching random walk is defined as follows. An ancestor at the origin produces offspring which is displaced on $\R$ according to a point process. Each new particle then produces again offspring independently of all other particles according to the same law. Denote the positions of the $n$-th generation particles by $(S(v))_{|v|=n}$ and suppose that for some $\lambda \in \C$,
$$ \mathfrak{m}(\lambda) ~:=~ \E \Big[ \sum_{|v|=1} e^{-\lambda S(v)}\Big]$$
exists and is nonzero. Then
$$ \c W_n(\lambda) ~:=~ \mathfrak{m}(\lambda)^{-n} \sum_{|v|=n} e^{-\lambda S(v)}$$
defines a $\C$-valued martingale that coincides with $Z_n$ upon identifying $$T_j=\mathfrak{m}(\lambda)^{-1}e^{-\lambda S(j)}.$$ These complex martingales were studied in \cite{Biggins:1992} to analyze the frequencies of particles with a certain speed in the branching random walk.

Let us consider a simple branching random walk with binary branching, i.e.,  $S(1), S(2)$ are i.i.d.\ with $\P(S(1)=1)=\P(S(1)=-1)=1/2$. Then $\mathfrak{m}(\lambda)=2 \cosh(\lambda)$,
$$ T_j=\frac{e^{-\lambda S(j)}}{2 \cosh(\lambda)}, \quad j=1,2; \qquad \qquad m(s) ~=~ \frac{2}{2^s} \frac{\cosh(s\Re(\lambda))}{|\cosh(\lambda)|^s}.$$
For given values of $\lambda$, the assumptions of Theorem \ref{thm:continuity of Z} are readily checked. 
Figures \ref{fig:1},\ref{fig:2} show estimates of the density of $W$ for different values of $\lambda$, based on the simulation algorithm proposed in \cite{Chen.Olvera-Cravioto:2015}. Sample size $n=10^6$, $10^2$ simulation steps. \nocite{R}


\begin{figure}
\centering
\includegraphics[width=.7\textwidth]{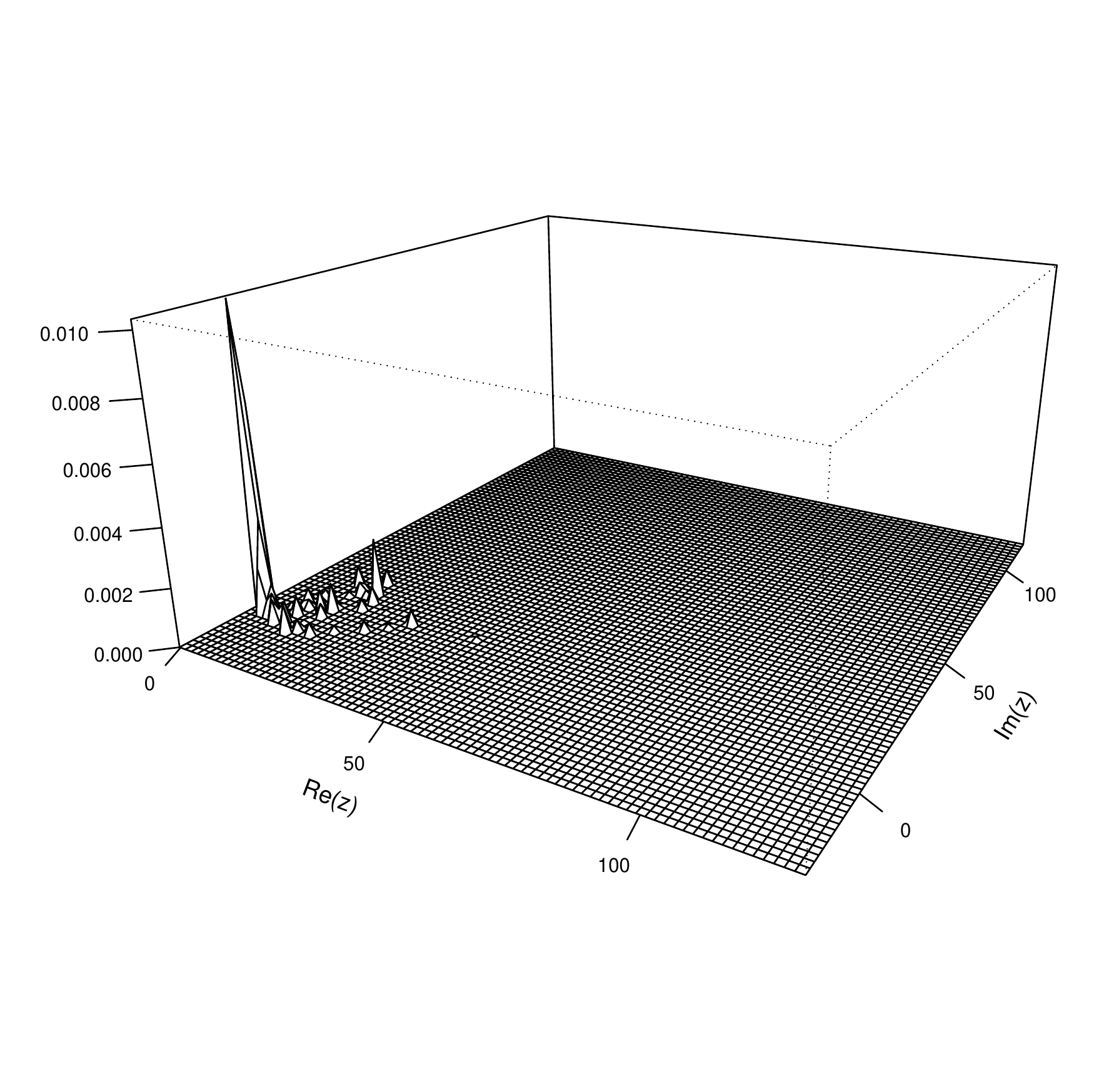}
\caption{Density estimate for Biggins' martingale with $\lambda=2.15*\exp(2 \pi \imag/23)$}
\label{fig:1}
\end{figure}

\begin{figure}
\centering
\includegraphics[width=.7\textwidth]{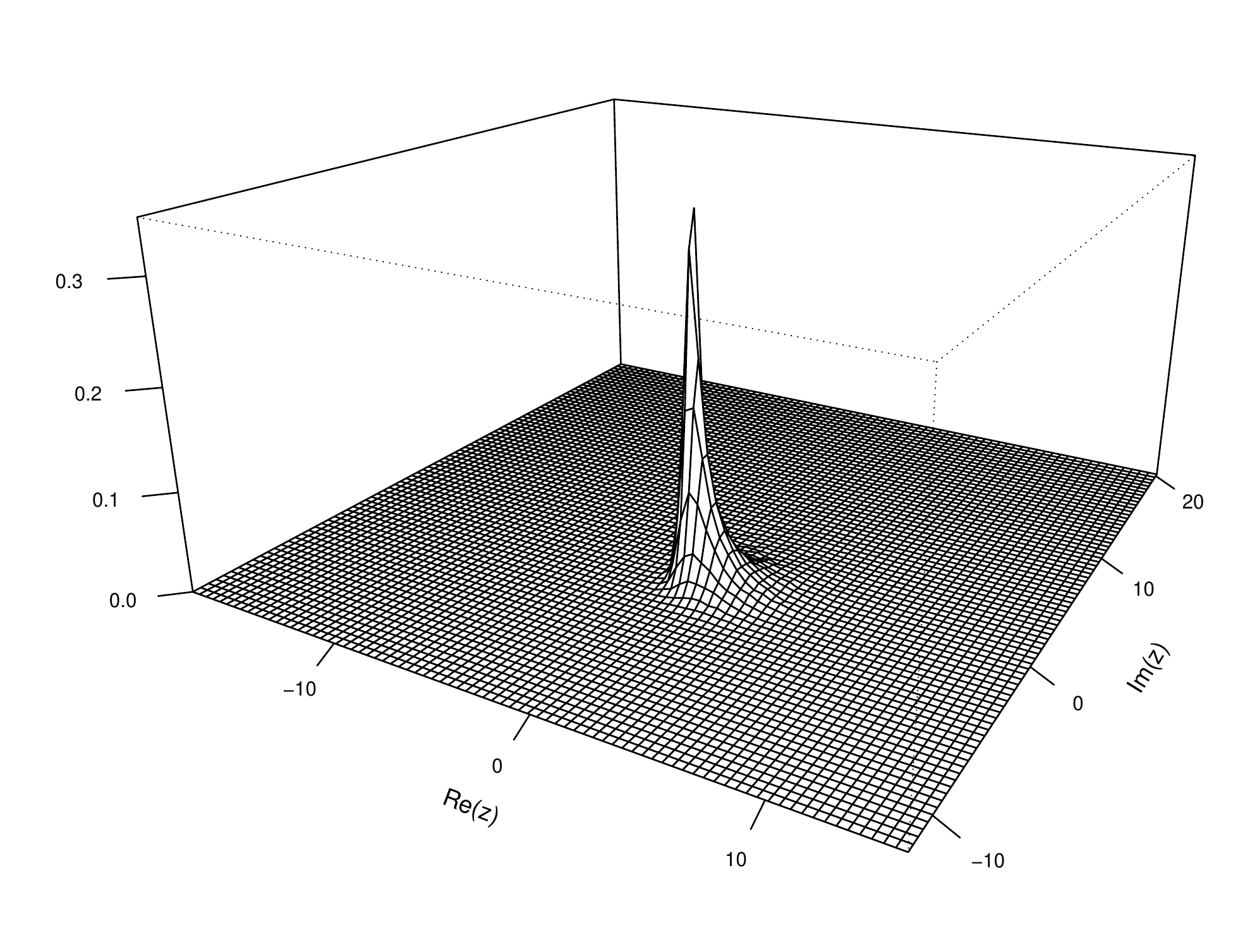}
\caption{Density estimate for Biggins' martingale with $\lambda=\exp( \pi \imag/4)$}
\label{fig:2}
\end{figure}


\subsubsection*{Cyclic P\'olya urns}

A cyclic P\'olya urn consists of balls of $b$ different types. Each time a ball of type $m$ is drawn, it is placed back into the urn together with a ball of type $m+1 \mod b$. If $b\ge 7$,  the asymptotic fluctuations of the proportion of balls of a given type are described in terms of a complex random variable $X$ with finite variance that satisfies 
$$ X \eqdist U^\zeta X_1 + \zeta (1-U)^\zeta X_2,$$
where $\zeta=\omega_b$ and $X_1, X_2$ are i.i.d.\ copies of $X$ which are independent of $U$, which is a uniform $[0,1]$-random variable; see e.g. \cite{Knape+Neininger:2014}.

We show how our result applies.
Assumptions \eqref{a1}--\eqref{a4} and \eqref{eq:Z} are readily checked, $\alpha=1/\Re(\zeta) \in (1,2)$ as soon as $b \ge 7$. Since the solution of interest has a second moment, it has to be $X=xZ$ for some $x \in \C$.  The set $\c Z :=\mathrm{supp}(Z)$ has to satisfy 
$$ u^\zeta \c Z + \zeta (1-u)^\zeta \c Z ~ \subset ~ \c Z \qquad \text{ for all } u \in [0,1]$$
which yields that $\c Z \nsubseteq \R$. Hence Theorem \ref{thm:continuity of Z} applies and shows that $X$ has a density. Figure \ref{fig:3} shows estimates of the density for different values of $b$, again based on the simulation algorithm proposed in \cite{Chen.Olvera-Cravioto:2015}. Sample size $n=10^5$, $10^2$ simulation steps.

\begin{figure}
\centering
\includegraphics[width=.96\textwidth]{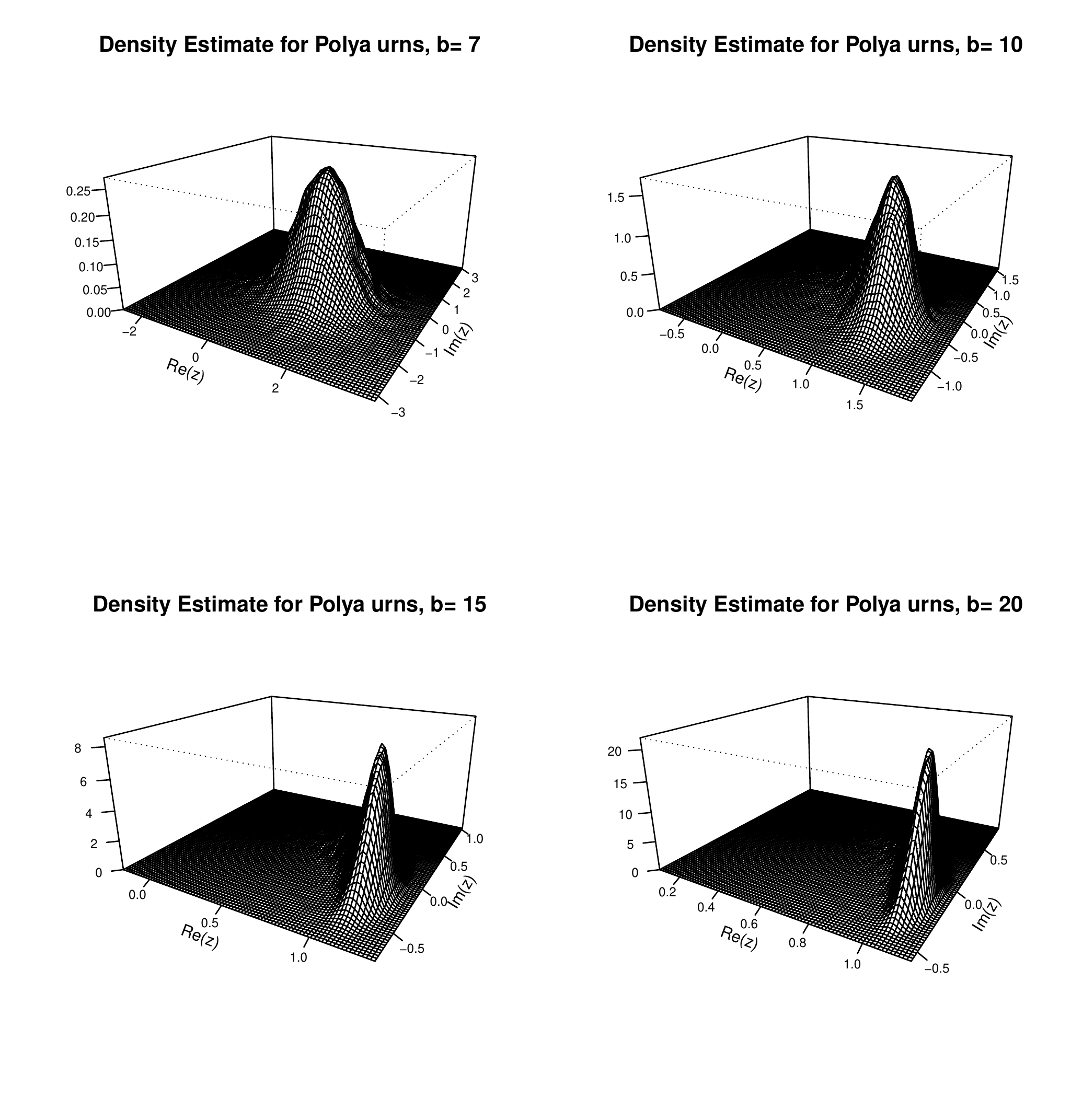}
\caption{Density estimates for cyclic P\'olya urns}
\label{fig:3}
\end{figure}

\section{Proofs} \label{sect:Proofs}

We start with the short proof of Proposition \ref{lem:Yt}.

Let $X$ be a random vector in $\R^d$ with characteristic function $\phi$. Then (the law of) $X$ is called {\em full}, if for all $v \neq 0$ in $\R^d$, $\innerprod{v,X}$ is not a point mass.  A complex-valued random variable $X$ is full, if it is full upon identifying $\C \simeq \R^2$. If $X$ is full, then there is $\epsilon >0$ such that $|\phi(\xi)|<1$ for all $0 < |\xi| < \epsilon$, see \cite[Lemma 1.3.15]{Meerschaert+Scheffler:2001}.

\begin{proof}[Proof of Proposition \ref{lem:Yt}.]
If there is no $U$-invariant linear subspace, then the invariance property \eqref{eq:operator stable} yields that the support of $Y_t$ is also not contained in a proper linear subspace of $C$, hence $Y_t$ is full. By \eqref{a1} and \eqref{a2}, the function $m$ is not constant, hence there is $u \in U$ with $|u| \neq 1$. Then, using that $Y_t$ is infinitely divisible, Eq. \eqref{eq:operator stable} yields that $Y_t$ is {\em operator semistable} (see \cite[Definition 7.1.2]{Meerschaert+Scheffler:2001}). By \cite[Theorem 7.1.15]{Meerschaert+Scheffler:2001}, a full operator semistable law has a density with respect to Lebesgue measure.
\end{proof}

%


\subsection{Proof of Theorem  \ref{thm:continuity of Z}}

\begin{lemma}\label{lem:support multiplicative}
Assume \eqref{a1}, \eqref{a2} and \eqref{eq:Z}. Then $\mathrm{supp}(Z)$ is closed under multiplication.
\end{lemma}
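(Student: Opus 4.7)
The plan is to establish $\mathrm{supp}(Z)\cdot\mathrm{supp}(Z)\subseteq\mathrm{supp}(Z)$ by combining the self-similar branching structure of $Z$ with the convergence $Z_n\to Z$ supplied by \eqref{eq:Z}. I would split the argument into two intermediate claims: (a) for every $n\ge 1$ and every $z\in\mathrm{supp}(Z)$ one has $z\cdot\mathrm{supp}(Z_n)\subseteq\mathrm{supp}(Z)$; (b) every $z_1\in\mathrm{supp}(Z)$ is the limit of some sequence $z_1^{(n)}\in\mathrm{supp}(Z_n)$. Granted both, the lemma follows in one line: approximate $z_1$ via (b), apply (a) to obtain $z_2 z_1^{(n)}\in\mathrm{supp}(Z)$, and let $n\to\infty$ using closedness of $\mathrm{supp}(Z)$ to conclude $z_1 z_2\in\mathrm{supp}(Z)$.

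Claim (b) is routine. Assumption \eqref{eq:Z} gives $Z_n\to Z$ in $L^1$, hence weakly, and the Portmanteau theorem yields $\liminf_n\P(Z_n\in U)\ge\P(Z\in U)>0$ for every open neighbourhood $U$ of a point of $\mathrm{supp}(Z)$, so $U$ meets $\mathrm{supp}(Z_n)$ for all $n$ large enough, producing the desired approximating sequence.

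For claim (a), I would iterate the fixed-point equation to generation $n$, writing $Z\eqdist\sum_{|v|=n} L(v) Z^{(v)}$ where the $Z^{(v)}$ are i.i.d.\ copies of $Z$ independent of $\mathcal{F}_n:=\sigma(L(u):|u|\le n)$. Fix $z_n\in\mathrm{supp}(Z_n)$, $z\in\mathrm{supp}(Z)$, $\delta>0$ and a small $\eta>0$; by choice of $z_n$ the event $A_\eta:=\{|Z_n-z_n|<\eta\}$ has positive probability. The decomposition
$$ Z-z\,z_n \;=\; \sum_{|v|=n} L(v)\bigl(Z^{(v)}-z\bigr) \;+\; z\,(Z_n-z_n) $$
shows that on $A_\eta$, forcing each of the finitely many $Z^{(v)}$ with $L(v)\ne 0$ to lie within $\delta/(2\sum_{|u|=n}|L(u)|)$ of $z$ forces $|Z-z\,z_n|<\delta$. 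Since $z\in\mathrm{supp}(Z)$ and the $Z^{(v)}$ are i.i.d.\ and independent of $\mathcal{F}_n$, this conditional event has strictly positive probability given $\mathcal{F}_n$ almost surely, and integrating against $\mathbf{1}_{A_\eta}$ yields $\P(|Z-z\,z_n|<\delta)>0$, i.e.\ $z\,z_n\in\mathrm{supp}(Z)$.

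The principal technical obstacle I foresee is in claim (a): neither $N_n:=\#\{|v|=n:L(v)\ne 0\}$ nor $\sum_{|v|=n}|L(v)|$ admits any deterministic upper bound, only almost-sure finiteness, which is exactly what the standing assumption $N<\infty$ a.s.\ guarantees at each generation. One must therefore verify that the \emph{almost sure} positivity of the conditional probability on $A_\eta$ survives the integration; this is automatic because the set on which the conditional probability is strictly positive has full measure, so its intersection with the positive-probability set $A_\eta$ still has positive probability. No moment assumptions beyond \eqref{a1}, \eqref{a2} and \eqref{eq:Z} are required.
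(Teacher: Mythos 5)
Your argument is correct and is essentially the proof the paper invokes: the paper's entire proof is a citation of Theorem 2 of Biggins and Grey (1979), whose argument is precisely this branching decomposition at generation $n$ combined with approximating $\mathrm{supp}(Z)$ by $\mathrm{supp}(Z_n)$, so your write-up just makes that explicit (and adapts it to complex weights). One small point of precision: since you use $Z_n$ and $Z$ simultaneously (in $A_\eta$ and in the event $|Z - z\,z_n|<\delta$), the identity $Z=\sum_{|v|=n}L(v)Z^{(v)}$ must be taken as the almost-sure identity in the weighted branching process (valid under \eqref{eq:Z} because the subtree martingales converge a.s., and generation $n$ is a.s.\ finite), not merely an equality in distribution.
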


\begin{proof}
Up to obvious modifications, this can be proved along the same lines as Thm. 2 in \cite{Biggins.Grey:1979}. 
\end{proof}

In the following, we restrict our attention to the case where $Z$ is properly $\C$-valued, i.e., $\mathrm{supp}(Z) \nsubseteq \R$. The simpler case $\mathrm{supp}(Z) \subset \R$ requires only minor modifications.

If $\mathrm{supp}(Z) \nsubseteq \R$, then Lemma \ref{lem:support multiplicative} yields that $\mathrm{supp}(Z)$ is not contained in any affine $\R$-linear subspace of $\C$, hence $Z$ is full.

\begin{lemma} \label{lem:decay of phi} Assume \eqref{a1}, \eqref{a2} and \eqref{eq:Z}, as well as $N\ge 1$ a.s. and $\E[N]< \infty$. Then
$\ell~:=~\limsup_{|\xi| \to \infty} |\phi(\xi)|=0$. 
\end{lemma}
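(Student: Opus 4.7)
My plan is to prove the bound $\ell \leq \E[\ell^N]$ from the fixed-point equation for $\phi$, combine it with the trivial inequality $\ell^N \leq \ell$ (valid because $N \geq 1$ and $\ell \in [0,1]$) to force $\ell = \E[\ell^N]$, deduce $\ell \in \{0,1\}$ from $\E[N] > 1$, and finally rule out $\ell = 1$ by exploiting fullness of $Z$ together with an iteration of the fixed-point equation.

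First I would record two preliminary facts. Since $Z \in L^1$ (because $\E[Z]=1$), the characteristic function $\phi$ is uniformly continuous. Moreover, by Lemma \ref{lem:support multiplicative} together with the hypothesis $\mathrm{supp}(Z) \not\subseteq \R$, the support of $Z$ is multiplicatively closed and not contained in any proper $\R$-linear subspace of $\C$, so $Z$ is full and $|\phi(\xi)| < 1$ for all $\xi \neq 0$.

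Next, choose a sequence $\xi_n$ with $|\xi_n| \to \infty$ and $|\phi(\xi_n)| \to \ell$. From \eqref{eq:SFPE:characteristic} and $|\phi| \le 1$, since $T_j \neq 0$ for $j \leq N$ a.s. forces $|\bar T_j \xi_n| \to \infty$, we obtain the pointwise bound $\limsup_n \prod_{j=1}^N |\phi(\bar T_j \xi_n)| \leq \ell^N$ a.s. Reverse Fatou (the integrands being bounded by $1$) then yields $\ell \leq \E[\ell^N]$. Since $N\geq 1$ a.s., the reverse inequality $\E[\ell^N] \leq \ell$ is trivial. Because $\E[N] > 1$ and $N \geq 1$ force $\P(N\geq 2) > 0$, equality $\ell = \E[\ell^N]$ excludes $\ell \in (0,1)$ (where $\ell^N < \ell$ on $\{N \geq 2\}$); hence $\ell \in \{0,1\}$.

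To rule out $\ell = 1$, I would iterate \eqref{eq:SFPE:characteristic} to get $\phi(\xi) = \E\bigl[\prod_{|v|=m} \phi(\bar L(v)\xi)\bigr]$ for every $m \geq 1$. Assuming $\ell = 1$, the products $p_n^{(m)} := \prod_{|v|=m,\,L(v)\neq 0}|\phi(\bar L(v)\xi_n)|$ satisfy $\E[p_n^{(m)}] \geq |\phi(\xi_n)| \to 1$ and $p_n^{(m)} \leq 1$, so $p_n^{(m)} \to 1$ in $L^1$ for each $m$, which gives $\max_{|v|=m,\,L(v)\neq 0}(1-|\phi(\bar L(v)\xi_n)|)\to 0$ in probability. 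Because $m(\cdot)$ is non-constant (as $m(0) > 1 = m(\alpha)$), $U$ contains elements of modulus strictly less than $1$, and on the survival set of the branching process the minimum of $|L(v)|$ over $|v|=m$ decays exponentially in $m$. Diagonalizing in $m$, I would pick $m_n \to \infty$ and $v_n$ with $|v_n| = m_n$ such that $\eta_n := \bar L(v_n)\xi_n$ stays in a fixed compact annulus of $\C\setminus\{0\}$; then $|\phi(\eta_n)| \to 1$, and a subsequential limit $\eta^* \neq 0$ would satisfy $|\phi(\eta^*)|=1$, contradicting fullness.

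The main obstacle is this last step: combining the convergence $p_n^{(m)}\to 1$ (valid for each fixed $m$) with a selection of $m_n$ and $v_n$ at generation $m_n$ such that $|L(v_n)|\cdot|\xi_n|$ lands in a prescribed compact annulus, while preserving the uniform control $|\phi(\bar L(v_n)\xi_n)| \to 1$. This requires a careful diagonal argument leveraging the branching random walk structure of $(\log|L(v)|)_{|v|=m}$, in particular the linear spread of its range with $m$.
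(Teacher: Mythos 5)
Your reduction to $\ell\in\{0,1\}$ is correct and is essentially the Liu-type argument the paper cites: reverse Fatou applied along a sequence $\xi_n$ realizing the limsup gives $\ell\le\E[\ell^N]$, while $N\ge1$ gives $\E[\ell^N]\le\ell$, and $\P(N\ge2)>0$ excludes $\ell\in(0,1)$. However, your preliminary claim that fullness of $Z$ yields $|\phi(\xi)|<1$ for \emph{all} $\xi\neq0$ is not correct as stated: fullness only gives this on a punctured neighbourhood of the origin (that is the content of the cited Lemma 1.3.15 in Meerschaert--Scheffler); a full lattice law on $\R^2$ has $|\phi|=1$ on a whole lattice of nonzero frequencies. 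The paper closes this by a separate argument which you would also need: if $R$ is the smallest radius at which $|\phi|$ attains the value $1$ (attained by continuity), taking absolute values in \eqref{eq:SFPE:characteristic} at a maximizing $\xi^*$ forces $|\phi(\bar T_1\xi^*)|=1$ a.s., contradicting $\P(0<|T_1|<1)>0$, which follows from $\E N>1$ and $m(\alpha)=1$. The same gap reappears at the end of your sketch, where ``$|\phi(\eta^*)|=1$ contradicts fullness'' is again not a contradiction with fullness alone.

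The more serious gap is the step ruling out $\ell=1$, which is only a plan whose missing ingredient is the actual mathematical content. (Incidentally, the diagonalization over $m$ is not where the difficulty lies: $\E[p_n^{(m)}]\ge|\phi(\xi_n)|$ holds for every $m$ and $n$ simultaneously, so $p_n^{(m_n)}\to1$ in probability for \emph{any} choice of $m_n$.) What you really need is a fixed compact annulus $\{a\le|z|\le b\}$ and generations $m_n$ such that, with probability bounded away from zero, some particle $v$ with $|v|=m_n$ satisfies $|L(v)|\,|\xi_n|\in[a,b]$. Under the lemma's hypotheses --- only \eqref{a1}, \eqref{a2}, \eqref{eq:Z}, $N\ge1$ a.s.\ and $\E[N]<\infty$, with no assumption \eqref{a3}, no lattice/non-lattice dichotomy and no control on the law of $\log|T_j|$ --- nothing bounds the gaps in the occupied range of the branching random walk $(\log|L(v)|)_{|v|=m}$ near the level $-\log|\xi_n|$, so ``linear spread of the range'' does not produce a particle within an $O(1)$ window of a prescribed level; you yourself flag this as the main obstacle, and as it stands the proof is incomplete there. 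The paper avoids this scale-chasing entirely: it fixes one generation $n$ and the set $\mathcal{L}=\{v:|v|=n,\ \delta\le|L(v)|\le1\}$, chosen so that $1<\E[\#\mathcal{L}]<\infty$ (possible since $\E\sum_{|v|=n}|L(v)|^\alpha=1$ bounds the expected number of weights exceeding $1$, while $(\E N)^n<\infty$); assuming $\ell=1$ it constructs, via a ``first radius where $|\phi|$ climbs back to $1-\eps$'' argument, an annulus $t_1<|\xi|<t_2$ with $t_1<\delta t_2$ on which $|\phi|\le1-\eps$ and a point $\xi^*$ with $|\xi^*|=t_2$, $|\phi(\xi^*)|=1-\eps$; iterating \eqref{eq:SFPE:characteristic} over $\mathcal{L}$ then yields $1-\eps\le\kappa(1-\eps)$ for $\kappa(s)=\E[s^{\#\mathcal{L}}]$, contradicting $\kappa(s)<s$ near $1$. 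If you want to complete your write-up, adopting this device (moderate weights in $[\delta,1]$ at a fixed generation, rather than following $1/|\xi_n|$ deep into the tree) is the natural fix, since it needs nothing beyond the stated hypotheses.
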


\begin{proof} 
By the same arguments as in \cite[Lemma 3.1\,(i)]{Liu:2001}, $\ell \in \{0,1\}$. 
As the next step, we prove that $|\phi(\xi)|<1$ for all $\xi \neq 0$. Since $Z$ is full, \cite[Lemma 1.3.15]{Meerschaert+Scheffler:2001} yields that there is $\eta >0$ such that $|\phi(\xi)|<1$ for all $0 < |\xi| < \eta$. Suppose 
$$ R:= \inf\big\{ r >0 \, : \, \exists\,\xi \text{ with } |\xi|=r \text{ s.t. } |\phi(\xi)|=1 \big\} < \infty.$$
Then choose $\xi^*$ with $|\xi^*|=R$ and $|\phi(\xi^*)|=1$. Taking absolute values on both sides of  Eq. \eqref{eq:SFPE:characteristic} yields $1 \le |\phi(\xi^*)| \le \E |\phi(\bar{T}_1 \xi^*)| \le 1,$ thus $|\phi(\bar{T}_1\xi^*)|=1$ a.s. But this contradicts $\P(0 < |T_1| < 1)>0$, which follows from $\E N>1$ and $m(\alpha)=1$. Hence $R=\infty$.

It remains to prove $\ell<1$. By \eqref{a2} and the branching property, $\E \sum_{|v|=n} |L(v)|^\alpha =1$ for all $n \in \N$, which yields that the expected number of summands exceeding 1 has to be smaller than one.   In addition, $\E[ \# \{v \, : \, |v|=n\}] = {\blue (\E[N])^n} < \infty$ gives that we can choose $\delta$ and $n$ such that
$$ \c L ~:=~ \{v \, : \, |v|=n, \ \delta \le |L(v)| \le 1\}$$
satisfies $1 < \E[\# \c L] < \infty$. Hence, for the moment generating function $\kappa(s):=\E \big[s^{\# \c L}\big]$ it holds $s-\kappa(s)>0$ for all $s \in (\eta,1)$, where $\eta$ is the unique root of $\kappa(s)-s=0$ on the interval $[0,1)$.

    
            Suppose $\ell=1.$ By the previous step, for sufficiently small $\epsilon>0$, there are $0<t_1<t_2$ with $t_1 < \delta t_2$ s.t.\ $|\phi(\xi)|<1-\epsilon$ for all $t_1 < |\xi| < t_2$, while there is $\xi^*$ with $|\xi^*|=t_2$ s.t.\ $|\phi(\xi^*)|=1-\epsilon$. By iterating Eq. \eqref{eq:SFPE:characteristic}, we obtain
$$ 1-\epsilon = |\phi(\xi^*)| ~\le~ \E\Big[ \prod_{v \in \c L} \big| \phi\big(\overline{L(v)} \xi^*\big)\big| \Big] \le \E \big[ (1-\epsilon)^{\# L}\big] ~=~ \kappa(1-\epsilon),$$
which contradicts $s > \kappa(s)$ for all $s \in (\eta, 1)$.
\end{proof}


%

\subsubsection*{Derivatives of the characteristic function}

To proceed further, we will consider the complex derivatives  $\partial_{\bar \xi} \phi(\xi)$ and $\partial_\xi \phi(\xi)$. Note that $\phi$ is differentiable as soon as $\E\big[ |Z| \big] < \infty$. 

One has {\blue to be careful}, because in the definition of $\phi$, the identification $\C=\R^2$ and the real inner product is used. We write $\xi=\xi _1+i\xi _2$ and
\begin{equation*}
\partial_{\xi}=\frac{1}{2}(\partial_{\xi _1}-i\partial_{\xi _2}),\quad
\partial_{\bar \xi}=\frac{1}{2}(\partial_{\xi _1}+i\partial_{\xi _2}) 
\end{equation*}
The characteristic function $\phi$ is given by
$$ \phi(\xi) ~=~ \E \Big[ \exp \big(-\imag \frac12 (\xi \bar Z +\bar \xi  Z\big) \big)\Big]=\E \Big[ \exp \big(-\imag \frac12 (\xi _1X + \xi _2 Y\big) \big)\Big],$$
where $Z=X+iY$. Hence
$$ \partial_{\xi} \phi(\xi) ~=~ \E \Big[ -\frac{\imag}{2} \bar Z \exp \big(-\imag \frac12 (\xi \bar Z + \bar \xi  Z\big) \big)\Big],$$
$$ \partial_{\bar{\xi}} \phi(\xi) ~=~ \E \Big[-\frac{ \imag }{2}Z\exp \big(-\imag \frac12 (\xi \bar Z +\bar \xi Z\big) \big)\Big].$$
because $\partial_\xi (\xi z) = z$, $\partial_{\bar \xi} (\xi z) =0$ for $z\in \C$.
Therefore, by the chain rule for complex differentiation (using Wirtinger derivatives)
\begin{equation}\label{eq:derivatives of phi} \partial_{\xi} \phi(\bar T \xi) = \bar{T} (\partial_\xi \phi)(\bar T \xi), \qquad \partial_{\bar \xi} \phi(\bar T \xi) = T (\partial_{\bar \xi} \phi)(\bar T \xi). \end{equation}
As the first step, we are going to prove decay rates for both derivatives.

\begin{lemma}\label{lem:decay rate g}
Suppose $N>0$ a.s., \eqref{a1}-\eqref{a2} with $\alpha \in (1,2)$, \eqref{eq:Z} and $\E[N]<\infty$. Then there is a finite constant $C$ such that 
\begin{equation}\label{eq:decay lemma} | \partial_{\xi} \phi(\xi )|\leq  C(1+|\xi |)^{-1} \quad \text{ and } \quad
| \partial_{\bar \xi} \phi(\xi )|\leq  C(1+|\xi |)^{-1} \quad \text{ for all } \xi \in \C. \end{equation}
\end{lemma}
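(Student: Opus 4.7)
The plan is to differentiate the fixed-point equation \eqref{eq:SFPE:characteristic} to obtain a recursion for $g:=\partial_\xi\phi$, rewrite it in terms of $H(\xi):=\xi g(\xi)$, and exploit the decay of $\phi$ established in Lemma \ref{lem:decay of phi} by iterating along the weighted branching tree.

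Since $\E|Z|<\infty$ (from the $L^1$-convergence in \eqref{eq:Z}) and since the split $|T_j|\le 1+|T_j|^\alpha$ combined with $\E N<\infty$ and $\E W_1=1$ gives $\E\sum_j|T_j|\le\E N+1<\infty$, termwise differentiation of \eqref{eq:SFPE:characteristic} under the expectation is justified by dominated convergence; using \eqref{eq:derivatives of phi} this yields
\[
g(\xi) = \E\Big[\sum_{j=1}^N \bar T_j\,g(\bar T_j\xi)\prod_{k\neq j}\phi(\bar T_k\xi)\Big].
\]
Multiplying by $\xi$ and iterating along the tree (justified at depth $n$ by $\E\sum_{|v|=n}|L(v)|=m(1)^n<\infty$) gives
\[
H(\xi) = \E\Big[\sum_{|v|=n} H(\bar L(v)\xi)\prod_{u\neq v,\,|u|=n}\phi(\bar L(u)\xi)\Big],\qquad n\ge 1.
\]
Since $|H(\xi)|\le G|\xi|$ with $G:=\E|Z|/2$ handles $|\xi|\le 1$, the desired bound reduces to $\sup_\xi|H(\xi)|<\infty$.

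For large $|\xi|$, I fix a small $\epsilon>0$ and let $R=R(\epsilon)$ be supplied by Lemma \ref{lem:decay of phi} so that $|\phi(\eta)|\le\epsilon$ whenever $|\eta|\ge R$. I split $\{|v|=n\}$ into the ``big'' set $V_+:=\{v:|L(v)\xi|\ge R\}$ and its complement. Each factor $|\phi(\bar L(u)\xi)|$ with $u\in V_+$ is at most $\epsilon$, so the product in the iterated recursion is bounded by $\epsilon^{|V_+|-\Ind_{v\in V_+}}$; on $V_+$ the inequality $|L(v)|\le(|\xi|/R)^{\alpha-1}|L(v)|^\alpha$ (valid since $\alpha>1$) yields $\sum_{v\in V_+}|L(v)|\le(|\xi|/R)^{\alpha-1}W_n$, while on $V_+^c$ one has $|H(\bar L(v)\xi)|\le RG$. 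Choosing $n$ so that $(\E N)^n$ is comparable to $(|\xi|/R)^\alpha$ forces $\epsilon^{|V_+|}$ to be super-polynomially small in $|\xi|$ on a high-probability event, dominating the polynomial factors $|\xi|^\alpha$ and $(\E N)^n$ that arise from the two sums, and producing a uniform bound $|H(\xi)|\le C$. The estimate for $\partial_{\bar\xi}\phi$ is obtained by the entirely symmetric argument using the second identity in \eqref{eq:derivatives of phi}.

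The main technical obstacle is making the heuristic ``at generation $n\sim\alpha\log(|\xi|/R)/\log\E N$ most $|L(v)|$ are of order $R/|\xi|$'' quantitative under only $\E N<\infty$, with no higher moment assumption on $N$ or on the Biggins martingale at the chosen generation. I would handle this by conditioning on the event $\{W_n\in[\tfrac12,2]\}$, whose probability is bounded below for all large $n$ by the $L^1$-convergence $W_n\to W$ with $\E W=1$, and then using Markov's inequality applied to $\sum_{|v|=n}|L(v)|^\alpha\Ind_{|L(v)|\ge R/|\xi|}$ to lower-bound $|V_+|$ by a power of $|\xi|$ on this event. The complementary event is controlled by the crude bound $|H|\le G|\xi|$ together with $\E[\#\{|v|=n\}]=(\E N)^n$, which suffices to close the estimate once the typical contribution is super-polynomially small in $|\xi|$.
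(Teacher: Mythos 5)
Your reduction to $\sup_\xi|H(\xi)|<\infty$ with $H(\xi)=\xi\,\partial_\xi\phi(\xi)$, the differentiated recursion, and the idea of exploiting Lemma \ref{lem:decay of phi} through the factors $\prod\phi(\bar L(u)\xi)$ are all sound and start from the same place as the paper. But the core of your argument — the quantitative tree-iteration step — has a genuine gap, and you have in effect flagged it yourself. First, to make $\epsilon^{|V_+|}$ super-polynomially small you need a \emph{lower} bound on $|V_+|=\#\{|v|=n:|L(v)|\ge R/|\xi|\}$ by a power of $|\xi|$; conditioning on $\{W_n\in[\tfrac12,2]\}$ only gives the \emph{upper} bound $|V_+|\le 2(|\xi|/R)^\alpha$, since the mass of $W_n$ could be carried by a few large weights or by the sub-threshold weights, and Markov's inequality applied to $\sum_{|v|=n}|L(v)|^\alpha\Ind\{|L(v)|\ge R/|\xi|\}$ gives tail upper bounds, not the lower bound on the count you need. (Moreover, your appeal to $W_n\to W$ in $L^1$ with $\E W=1$ uses Biggins' theorem, i.e.\ \eqref{a3}, which is not among the hypotheses of the lemma.) Second, and more fatally, the complementary ``bad'' event cannot be closed by the crude bound: on it your estimate is of order $|\xi|\,\E\big[\sum_{|v|=n}|L(v)|\Ind_{\mathrm{bad}}\big]$ or $R\,\E\big[\#\{|v|=n\}\Ind_{\mathrm{bad}}\big]$, and with $n$ chosen so that $(\E N)^n\asymp(|\xi|/R)^\alpha$ the unrestricted expectations are polynomially large in $|\xi|$ ($m(1)^n$ resp.\ $(\E N)^n$). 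To beat them you would need the bad event to have probability — in fact, size-biased probability with respect to the generation-$n$ population or its $|L(v)|$-mass — of order $|\xi|^{-(1+\alpha)}$, and nothing in your outline (only $\E N<\infty$, no second moments, no $N\log N$ condition, no concentration for $W_n$ or for $\#\{|v|=n\}/(\E N)^n$) delivers such a quantitative estimate. So as written the argument does not close.

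For contrast, the paper never iterates to a $\xi$-dependent generation. It works with the one-step inequality obtained from \eqref{eq:SFPE:characteristic} and \eqref{eq:derivatives of phi}: with $N_\delta=\sum_{j\le N}\Ind\{|T_j|>\delta\}$ and $|\xi|>t_\eps\delta^{-1}$, the product of the $\phi$-factors is at most $\eps^{(N_\delta-1)_+}$, giving $|g(\xi)|\le\E\big[\eps^{(N_\delta-1)_+}\sum_j|T_j||g(\bar T_j\xi)|\big]$. One then defines a tilted random variable $B$ with total mass $p=q_{\eps,\delta}$ and checks, using $N\ge1$ a.s., $\P(N>1)>0$ and dominated convergence with $\E N<\infty$, that $\eps,\delta$ can be chosen so that both $p<1$ and $p\,\E[|B|^{-1}]<1$; the Gronwall-type lemma of Liu \cite[Lemma 3.2]{Liu:2001} applied to $g^*(t)=\max_{|\xi|=t}|g(\xi)|$ then yields $g^*(t)=O(t^{-1})$ directly. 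This avoids any control of the generation-$n$ population or of $W_n$, which is precisely where your approach gets stuck; if you want to salvage your route, you would need to import exactly this kind of one-step contraction (or an equivalent renewal/Gronwall argument) rather than a high-probability counting bound at a $\xi$-dependent depth.
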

\begin{remark}
{\blue If $\mathrm{supp} (Z)\subset \R $ it follows that $ \partial_{\xi} \phi(\xi ), \partial_{\bar \xi} \phi(\xi )$ are square integrable.}
\end{remark}
\begin{proof} We will prove the estimate for $\partial_{\bar \xi} \phi$. The proof for $\partial_{\xi } \phi$ is completely analogous, up to replacing $T_j$ by $\bar{T}_j$. Define $g(\xi ):=\partial _{\bar \xi}\phi (\xi )$. 
Then, differentiating both sides of Eq. \eqref{eq:SFPE:characteristic} and using \eqref{eq:derivatives of phi}
\begin{equation}\label{eq:functional equation for g}
g(\xi )=\E \Big[\sum_{j=1}^N T_j g(\bar T_j\xi ) \prod_{i\neq j} \phi (\bar T_i\xi ) \Big].
\end{equation}
Note that the right hand side is finite by using that $m(1)<\infty$ and that $g$ is bounded by $\E|Z|< \infty$. 
By Lemma \ref{lem:decay of phi}, for every $\eps $, there is $t_{\eps }$ such that
$|\phi (\xi )|<\eps $  for every $| \xi |>t_{\eps }$. Given $\delta >0$ let
$$
N_{\delta }=\sum _{j=1}^N\Ind \{|T_j|>\delta \}.$$ 
If $N_{\delta }\geq 1$ and $|\xi |> t_{\eps }\delta ^{-1}$ then for all $1 \le j \le N$,
\begin{equation}\label{eq:estimate epsilon Ndelta}
\prod_{i \neq j} |\phi (T_i\xi )|\leq \eps ^{N_{\delta }-1} \end{equation}
and hence 
\begin{equation}\label{eq:estimate g}
|g(\xi )|\leq \E \Big[ \varepsilon^{(N_\delta -1)_+}  \sum_{j=1}^N |T_j||g(\bar T_j\xi )|   \Big] \qquad \text{ for } |\xi| > t_\varepsilon \delta^{-1}.
\end{equation}
Define a complex valued random variable $B$ by
\begin{equation}
\E h(B )=q_{\eps, \delta }^{-1} \E \Big[ \varepsilon^{(N_\delta -1)_+}  \sum_{j=1}^N |T_j| h(\bar T_j) \Big],
\end{equation}
where $q_{\eps, \delta }= \E \Big[ \varepsilon^{(N_\delta -1)_+} \sum_{j=1}^N |T_j|\Big]$.
If $\delta \to 0$ then $N_\delta \to N \ge 1$ and thus, using that $m(1)<\infty$ and monotone convergence 
\begin{equation}\label{eq:limit qepsilondelta}
\lim_{\delta \to 0} q_{\eps, \delta } ~=~\E \Big[ \eps^{N-1} \sum_{j=1}^N |T_j| \Big].
\end{equation}
 Moreover,
\begin{align}
q_{\eps, \delta }\E \big[ |B|^{-1}\big]&~=~ \E \Big[ \varepsilon^{(N_\delta -1)_+} \sum_{j=1}^N |T_j||\bar T_j|^{-1} \Big] \nonumber
\\
&=~\E \Big[ N \varepsilon^{(N_\delta -1)_+} \Big] ~\stackrel{\delta \to 0}{\to}~ \E \Big[ N \varepsilon^{N-1} \Big] \label{eq:limit qepsilondelta B}
\end{align}
when $\delta \to 0$, using that $\E\big[ N\big]< \infty$ by assumption. 
Hence, by Eq.s \eqref{eq:limit qepsilondelta} and \eqref{eq:limit qepsilondelta B}, we can choose $\delta $ and $\eps $ small enough such that $q_{\eps , \delta }<1$ and $q_{\eps,\delta} \E\big[|B|^{-1} \big] < 1$. Recall that we assume throughout that $\P(N=0)=0$ to avoid an atom at zero.

From now on, $\delta$ and $\eps$ are fixed and we write $p:=q_{\eps,\delta}<1$. By \eqref{eq:estimate g}, it holds for all $|\xi |\geq t_{\eps }\delta ^{-1}$ that
\begin{equation*}
|g(\xi )|\leq p\E \big[ |g(B\xi )| \big],
\end{equation*} 
and we have that $p \E \big[ |B|^{-1} \big| < 1$.
Recalling that $|g|$ is bounded by $\E \big[ |Z| \big]$, we can apply a Gronwall-type Lemma \cite[Lemma 3.2]{Liu:2001} to the real-valued function $$ g^* : \R_+ \to \R_+, \qquad g^*(t) ~:=~ \max \{ |g(\xi)| \, : \, |\xi|=t\}$$
to conclude that $g^*(t)=O(t^{-1})$. The assertion follows.
%
\end{proof}

\begin{lemma} Suppose $N>0$ a.s., \eqref{a1}-\eqref{a2} with $\alpha \in (1,2)$, \eqref{eq:Z} and \eqref{c1}.
Then  $\partial_{\xi } \phi$ and $\partial_{\bar \xi}$ are square integrable (w.r.t. Lebesgue measure on $\C$).
\end{lemma}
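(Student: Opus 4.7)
The decay $|g(\xi)|\le C(1+|\xi|)^{-1}$ from Lemma~\ref{lem:decay rate g} is only borderline for square integrability on $\C\simeq\R^2$: in polar coordinates $\int(1+|\xi|)^{-2}\,d\xi$ diverges logarithmically at infinity. My plan is to revisit the functional equation \eqref{eq:functional equation for g} and bootstrap the $L^2$-integral until the offending logarithm is absorbed. I will give the argument for $g=\partial_{\bar\xi}\phi$; the case of $\partial_\xi\phi$ is identical modulo swapping $T_j\leftrightarrow\bar T_j$. In the real case $\mathrm{supp}(Z)\subset\R$ the integration is one-dimensional, so $(1+|\xi|)^{-2}$ is already integrable on $\R$ and no bootstrap is needed; $\E[N]<\infty$ suffices.

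The starting point is the pointwise estimate
$$|g(\xi)|\le \E\Bigl[\eps^{(N_\delta-1)_+}\sum_{j=1}^N|T_j|\,|g(\bar T_j\xi)|\Bigr],\qquad|\xi|\ge R_0:=t_\eps\delta^{-1},$$
obtained inside the proof of Lemma~\ref{lem:decay rate g}. Squaring and applying Jensen's inequality to $\E$ followed by the Cauchy--Schwarz estimate $\bigl(\sum_j a_j\bigr)^2\le N\sum_j a_j^2$ to the sum yields
$$|g(\xi)|^2\le \E\Bigl[\eps^{2(N_\delta-1)_+}N\sum_{j=1}^N|T_j|^2|g(\bar T_j\xi)|^2\Bigr]\qquad(|\xi|\ge R_0).$$
Set $F_R:=\int_{|\xi|\le R}|g|^2\,d\xi$, which is finite for every $R$ because $g$ is bounded. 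Integrating the previous display over $\{R_0\le|\xi|\le R\}$ and changing variables $\eta=\bar T_j\xi$ on $\C$ (Jacobian $|T_j|^2$) cancels the $|T_j|^2$ factors and produces
$$F_R \le F_{R_0}+\E\Bigl[\eps^{2(N_\delta-1)_+}N\sum_{j=1}^N F_{R|T_j|}\Bigr].$$
With the abbreviations $\rho:=\E\bigl[\eps^{2(N_\delta-1)_+}N^2\bigr]$ and $K:=\E\bigl[\eps^{2(N_\delta-1)_+}N\sum_j\log_+|T_j|\bigr]$, using $\log_+(R|T_j|)\le\log_+R+\log_+|T_j|$ and substituting the base bound $F_R\le C_0+D_0\log_+R$ (which follows at once from Lemma~\ref{lem:decay rate g}) on the right-hand side leads to the self-improving inequality $F_R\le C_1+D_1\log_+R$ with $D_1=\rho D_0$ and $C_1=F_{R_0}+\rho C_0+D_0 K$.

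Iterating $n$ times gives $F_R\le C_n+D_n\log_+R$ with $D_n=\rho^nD_0$ and $C_n$ obeying a geometric recursion converging to $F_{R_0}/(1-\rho)$ as soon as $\rho<1$; letting $n\to\infty$ for fixed $R$ and then $R\to\infty$ would yield $\|g\|_{L^2(\C)}^2\le F_{R_0}/(1-\rho)<\infty$. The main technical point will be to arrange $\rho<1$. Here both halves of \eqref{c1} enter: $\E[N^2]<\infty$ lets dominated convergence give, as $\delta\to 0$ and then $\eps\to 0$,
$$\rho \ \longrightarrow\ \E\bigl[N^2\Ind\{N=1\}\bigr]=\P(N=1),$$
which is strictly less than $1$ because $\E[N]>1$ by \eqref{a1} and $N\ge 1$ a.s.; the second half of \eqref{c1} ensures $K<\infty$. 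One then checks that $\eps,\delta$ can simultaneously be made small enough to satisfy this new inequality together with the thresholds already imposed in Lemma~\ref{lem:decay rate g}, all of which are of the form ``$\eps,\delta$ small enough''.
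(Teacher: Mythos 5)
Your argument is correct and is essentially the paper's own proof: the same Jensen/Cauchy--Schwarz squaring of the pointwise inequality, the same integral recursion $F_R\le F_{R_0}+\E\bigl[\eps^{2(N_\delta-1)_+}N\sum_j F_{R|T_j|}\bigr]$ obtained by the two-dimensional change of variables, and the same iteration starting from the logarithmic base bound, with $\eps,\delta$ chosen so that $\rho=\E[\eps^{2(N_\delta-1)_+}N^2]<1$ (possible since $\rho\to\P(N=1)<1$) and $K<\infty$ by \eqref{c1}. The paper packages the iteration as an explicit induction claim $I(K)\le\sum_{n\le m}\gamma^nI(U)+m\gamma^{m-1}\beta C+\gamma^mC\log_+K$, which is exactly your recursion for $C_n,D_n$, so there is no substantive difference.
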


\begin{proof} As before, we focus on $g(\xi )=\partial _{\bar \xi}\phi (\xi )$.
By taking squares in Eq. \eqref{eq:estimate g} and applying Jensen's inequality to the discrete probability measure $\sum_{j=1}^N \frac{1}{N} \delta\{|T_j||g(\bar T_j \xi)|\}$, we obtain
\begin{align}
|g(\xi )|^2 ~&\leq~ \E \Big[ \varepsilon ^{2(N_\delta-1)_+} N^2 \Big(\frac{1}{N}\sum_{j=1}^N |T_j| |g(\bar T_j\xi )| \Big)^2 \Big] \nonumber \\
\label{eq:bound gsquare} &\leq~ \E \Big[ \varepsilon ^{2(N_\delta-1)_+} N \sum_{j=1}^N   \big(|T_j| |g(\bar T_j\xi )|\big)^2  \Big], 
\end{align}
and this estimate is valid for all $\xi$ with $|\xi| \ge t_\epsilon \delta^{-1}$.
Using the decay properties of $g$ provided by Lemma \ref{lem:decay rate g}, we have that the right hand side in \eqref{eq:bound gsquare} is bounded by
$$ \E \Big[ \varepsilon ^{2(N_\delta-1)_+} N \sum_{j=1}^N \big(|T_j| C (1+|T_j||\xi|)^{-1}\big)^2  \Big] ~\le ~ \frac{C}{|\xi|^2} \E \Big[ \varepsilon^{2(N_\delta-1)_+} N^2 \Big],$$
which is finite due to \eqref{c1}.
Defining
$$ I(K) ~:=~ \int_{|\xi| \le K} |g(\xi)|^2 \, d\xi$$
and using the change-of-variables formula (on $\C$), we have with $U:=t_\epsilon \delta^{-1}$
\begin{equation}\label{eq:estimate IK}
I(K)~\le~ I(U) + \E \Big[ \varepsilon^{2(N_\delta-1)_+} N \sum_{j=1}^N I\big( |T_j| K\big) \Big]
\end{equation}
Now choose $\epsilon$ and $\delta$ small such that
$$ \gamma :=\E \Big[ \varepsilon^{2(N_\delta-1)_+} N^2\Big] <1.$$ This is possible since $N_\delta \to N$ a.s. for $\delta \to 0$, $\P(N>1)>0$ and $ \E\big[ N^2\big] < \infty .$ Recall that
$$ \beta:=\E \Big[ \varepsilon^{2(N_\delta-1)_+} N \sum_{j=1}^N \log_+ |T_j|\Big]< \infty $$
by assumption.
The remainder of the proof relies on the following claim.


{\textbf Claim}: For all $m \in \N$, 
$$ I(K) ~\le~\sum_{n=0}^m \gamma^n I(U) + m \gamma^{m-1} \beta C + \gamma^m C \log_+ K,$$
where $C < \infty$ is the constant factor in the growth rate of $g$.

\medskip

If the claim holds, then  $I(K) ~\le~ \frac{I(U)}{1-\gamma} < \infty$ for all $K$, which proves that $g: \C \to \C$ is in $L^2$.

\medskip

{\textbf Proof of the Claim}: We proceed by induction over $m \in \N$. For $N=0$, the estimate on the growth rate of $g$, provided by Lemma \ref{lem:decay rate g}, gives (by possible enlarging $U$)
$$ I(K) \le C \pi + C \log_+ K ~\le~ I(U) + C \log_+K.$$
Note that we are integrating over $\C$, which is a two-dimensional $\R$-space.

Suppose the claim holds for $m \in \N$. This means $I(K) \le a +b \log_+ K$ with the values  $a=\sum_{n=0}^m \gamma^n I(U) + m \gamma^{m-1} \beta C$ , $b=\gamma^m C$. Using Eq. \eqref{eq:estimate IK} to iterate, we obtain
\begin{align*} I(K) ~&\le~ I(U) + \E \Big[ \varepsilon^{2(N_\delta-1)_+} N \sum_{j=1}^N \big( a + b \log_+ |T_j| + b\log_+ K\big) \Big] \\
\nonumber ~&=~ I(U) + \E \Big[ \varepsilon^{2(N_\delta-1)_+} N^2  \big( a + b\log_+ K\big) \Big] + \E \Big[ \varepsilon^{2(N_\delta-1)_+} N \sum_{j=1}^N b \log_+ |T_j| \Big] \\
\label{eq:iterated bound}  ~&=~ I(U) + \gamma a + \gamma b\log_+ K + \beta b \\
~&=~ I(U) +  \gamma \sum_{n=0}^{m} \gamma^n I(U) + m\gamma^m \beta C + \gamma^{m+1} C\log_+ K + \beta \gamma^m C
\end{align*}
which proves the claim.
\end{proof}

Now we are in a position to prove Theorem \ref{thm:continuity of Z}.

\begin{proof}[Proof of Theorem \ref{thm:continuity of Z}.]
Writing $\xi=\xi_1+\imag \xi_2$ and using 
$$ {\partial_{\xi_1} \phi} = \partial_\xi \phi + \partial_{\bar \xi} \phi, \qquad \partial_{\xi_2} \phi = \imag \Big( \partial_\xi \phi - \partial_{\bar \xi} \phi \Big)$$ we have obtained the square integrability of $\partial_{\xi_1} \phi$ and $\partial_{\xi_2} \phi$.

For $j=1,2$, $(\partial_{\xi_j} \phi(\xi)) d\xi$ defines a tempered distribution \cite[VI.2.(4')]{Yosida1980}. By the Plancherel theorem \cite[VI.2.(19)]{Yosida1980}, its Fourier inverse  $$\c F^{-1} \big(\partial_{\xi_j} \phi(\xi) d\xi \big) ~=:~ f_j(z) dz$$ is a tempered distribution defined with square integrable function $f_j$.
On the other hand,
$$ \c F^{-1} \big(\partial_{\xi_j} \phi(\xi) d\xi \big) ~=~ -\imag z_j \c F^{-1} \big(\phi(\xi) d\xi \big) \qquad \Big( z = z_1 + \imag z_2\Big)$$
by \cite[VI.2.(18)]{Yosida1980}. But  $\c F^{-1} \big(\phi(\xi) d\xi \big)$ is nothing but the tempered distribution given by $\P(Z \in dz)$ (in the sense of  \cite[VI.2.(4)]{Yosida1980}), this can be seen as in  \cite[VI.2.(11)]{Yosida1980}. Hence
$$ f_j(z) dz ~=~ - \imag z_j \P(Z \in dz).$$
This shows that for $j=1,2$, $- \imag z_j \P\big(Z \in d(z_1, z_2)\big)$ has a square integrable density $f_j$ on $\C$.
We decompose $\C \setminus \{0\}\simeq \R^2$ into the disjoint union of sets
$$ C_1 = \{ z=(z_1,z_2) \, : \, |z_2| < z_1\}, \ C_2= \{ z \, : \, |z_1| \le z_2, z_2 \neq 0 \}, $$
$C_3 =-C_1$ and $C_4 =-C_2$. On $C_1 \cup C_3$, $\P(Z \in dz)$ has a density given by $(- \imag z_1)^{-1} f_1(z)$, while on $C_2 \cup C_4$, a density for $\P(Z \in dz)$ is given by $(- \imag z_2)^{-1} f_2(z)$. 

{\blue Therefore $\P (Z\in dz )=\P (Z=0)\delta _0+\nu $, where $\nu $ has a density. Then it holds that $\P (Z=0)=\limsup _{|\xi |\to \8}|\phi (\xi )|=0$ in view of Lemma
\ref{lem:decay of phi}  and so 
$\P(Z \in dz)$ is absolutely continuous w.r.t.\ Lebesgue measure on $\C$.}
\end{proof}

\begin{remark}\label{rem:m2 finite}
If $m(2)<\infty$, $\E[N^2] <\infty$ and $\E |Z|^2< \infty$, then   $h(\xi):=\partial ^2_{\bar \xi} \phi(\xi)$ is in $L^{1+\varepsilon}$ for any $\varepsilon >0$, namely $h(\xi) = O(|\xi|^{-2})$. \\
In a similar way, for all $k \in \N$, $k >2$ the following holds: $m(k)<\infty$, $\E[N^k] < \infty$ and $\E |Z|^k < \infty$ imply that $\partial_{\bar \xi}^{(k)} \phi(\xi) = O(|\xi|^{-k})$. {\blue Hence the density $f$ of $\P (Z\in dz)$ belongs to $C^{k-3}(\C \setminus \{ 0\})$ and derivatives of $f$ 
of order for $k-2$ exist in a weak sense on $\C \setminus \{ 0\}$.}
\end{remark}

\begin{proof}[Proof of Remark \ref{rem:m2 finite}]
Firstly, $\E |Z|^2< \infty$ guarantees the existence of $h(\xi)$ and that $h$ is bounded. By the convexity of $m$, the finiteness of $m(0)=\E N < \E [N^2]$ and $m(2)$ yields that $m(1)<\infty$. Hence the assumptions of Lemma \ref{lem:decay rate g} are satisfied and  we obtain the bound $|g(\xi)|=|\partial_{\bar \xi} \phi(\xi)| \le C(1+|\xi|)^{-1}$. Taking derivatives on both sides of Eq. \ref{eq:functional equation for g}, we have
\begin{equation*}\label{eq:functional equation for h}
h(\xi )=\E \Big[\sum_{j=1}^N T_j^2 h(\bar T_j\xi ) \prod_{i\neq j} \phi (\bar T_i\xi )  +  2 \sum_{1 \le i < j \le N} T_i T_j g(\bar T_i\xi ) g(\bar T_j\xi ) \prod_{k\neq i,j} \phi (\bar T_k\xi ) \Big].
\end{equation*}
Using the weaker estimate $|g(\bar{T}_i \xi)| \le C |T_j|^{-1} |\xi|^{-1}$, we deduce
\begin{equation}\label{eq:estimate for h}
|h(\xi )| ~\le~\E \Big[\sum_{j=1}^N |T_j|^2 |h(\bar T_j\xi )| \prod_{i\neq j} |\phi (\bar T_i\xi )| \Big]  +  2C  \E \big[ N^2 \big]  |\xi|^{-2}.
\end{equation}
Now one can proceed as in the proof of Lemma \ref{lem:decay rate g}, defining a complex random variable $B$ such that for any test function $f$
$$ \E f(B) ~=~ p^{-1} \E \Big[ \varepsilon^{(N_\delta -1)_+} \sum_{j=1}^N |T_j|^2 f(\bar T_j)\Big]$$
with the normalization constant $p <1$. Then $p\E \big[ |B|^{-2} \big] \le \E \big[ \varepsilon^{(N_\delta -1)_+} N \big] < 1$ for $\varepsilon$ sufficiently small, and
$$ |h(\xi)| ~\le~ p \E \big[ h(B \xi) \big] + C' |\xi|^{-2}.$$
This is indeed sufficient to proceed as in \cite[Lemma 3.2]{Liu:2001} to conclude that $|h(\xi)|=O(|\xi^{-2})$.

This estimate can then be used in a similar way to produce bounds for $\partial_{\bar \xi }^{(3)} \phi(\xi)$, and so on. 
\end{proof}

\providecommand{\bysame}{\leavevmode\hbox to3em{\hrulefill}\thinspace}
\providecommand{\MR}{\relax\ifhmode\unskip\space\fi MR }
\providecommand{\MRhref}[2]{%
  \href{http://www.ams.org/mathscinet-getitem?mr=#1}{#2}
}
\providecommand{\href}[2]{#2}



{\textbf Acknowledgements:} {We thank Kevin Leckey for helpful discussions during the preparation of the article. The research of S.M.\ was supported by DFG Grant 392119783. {\blue The research of E.D.\ was supported by NCN Grant UMO-2014/15/B/ST1/00060}.  
}



\begin{thebibliography}{10}

\bibitem{Biggins:1992}
J.~D. Biggins, \emph{Uniform convergence of martingales in the branching random
  walk}, Ann. Probab. \textbf{20} (1992), no.~1, 137--151. \MR{1143415}

\bibitem{Biggins.Grey:1979}
John~D. Biggins and D.~R. Grey, \emph{Continuity of limit random variables in
  the branching random walk}, J. Appl. Probab. \textbf{16} (1979), no.~4,
  740--749. \MR{549554 (80j:60107)}

\bibitem{Chauvin+Liu+Pouyanne:2014}
Brigitte Chauvin, Quansheng Liu, and Nicolas Pouyanne, \emph{Limit
  distributions for multitype branching processes of {$m$}-ary search trees},
  Ann. Inst. Henri Poincar\'e Probab. Stat. \textbf{50} (2014), no.~2,
  628--654. \MR{3189087}

\bibitem{Chen.Olvera-Cravioto:2015}
Ningyuan Chen and Mariana Olvera-Cravioto, \emph{Efficient simulation for
  branching linear recursions}, Proceedings of the 2015 Winter Simulation
  Conference (Piscataway, NJ, USA), WSC '15, IEEE Press, 2015, pp.~2716--2727.

\bibitem{Knape+Neininger:2014}
Margarete Knape and Ralph Neininger, \emph{P\'olya urns via the contraction
  method}, Combin. Probab. Comput. \textbf{23} (2014), no.~6, 1148--1186.
  \MR{3265841}

\bibitem{Kolesko.Meiners:2017}
Konrad Kolesko and Matthias Meiners, \emph{Convergence of complex martingales
  in the branching random walk: the boundary}, Electron. Commun. Probab.
  \textbf{22} (2017), 14 pp.

\bibitem{Leckey:2016}
K.~{Leckey}, \emph{{On Densities for Solutions to Stochastic Fixed Point
  Equations}}, ArXiv e-prints (2016).

\bibitem{Liu:2001}
Quansheng Liu, \emph{Asymptotic properties and absolute continuity of laws
  stable by random weighted mean}, Stochastic Process. Appl. \textbf{95}
  (2001), no.~1, 83--107. \MR{1847093 (2002e:60141)}

\bibitem{Meerschaert+Scheffler:2001}
Mark~M. Meerschaert and Hans-Peter Scheffler, \emph{Limit distributions for
  sums of independent random vectors}, Wiley Series in Probability and
  Statistics: Probability and Statistics, John Wiley \& Sons, Inc., New York,
  2001, Heavy tails in theory and practice. \MR{1840531 (2002i:60047)}

\bibitem{Meiners.Mentemeier:2017}
Matthias Meiners and Sebastian Mentemeier, \emph{Solutions to complex smoothing
  equations}, Probab. Theory Related Fields \textbf{168} (2017), no.~1-2,
  199--268. \MR{3651052}

\bibitem{R}
{R Core Team}, \emph{R: A language and environment for statistical computing},
  R Foundation for Statistical Computing, Vienna, Austria, 2015.

\bibitem{Yosida1980}
K{\^o}saku Yosida, \emph{Functional analysis}, sixth ed., Grundlehren der
  Mathematischen Wissenschaften, vol. 123, Springer-Verlag, Berlin, 1980.
  \MR{617913 (82i:46002)}

\end{thebibliography}
\end{document}